\theoremstyle{theorem}
\newtheorem{theorem}{Theorem}[section]
\newtheorem{corollary}[theorem]{Corollary}
\newtheorem{lemma}[theorem]{Lemma}
\theoremstyle{definition}
\theoremstyle{example}
\newcommand{\boundellipse}[3]
{(#1) ellipse (#2 and #3)
}
\title[]{Elementary Proofs and Generalizations of Recent Congruences of Thejitha and Fathima}
\author{James A. Sellers}
\address{Department of Mathematics and Statistics, 
University of Minnesota Duluth, Duluth, MN 55812, USA}
\email{jsellers@d.umn.edu}
\subjclass[2020]{Primary 05A17; Secondary 11P83.}
\keywords{partitions, congruences, generating functions}
\date{}
\begin{document}


\maketitle

\begin{abstract}
Motivated by recent work of Hirschhorn and the author, Thejitha and Fathima recently considered arithmetic properties satisfied by the function $a_5(n)$ which counts the number of integer partitions of weight $n$ wherein even parts come in only one color (i.e., they are monochromatic), while the odd parts may appear in one of five colors.  They proved two sets of Ramanujan--like congruences satisfied by $a_5(n)$, relying heavily on modular forms.  In this note, we prove their results via purely elementary means, utilizing generating function manipulations and elementary $q$-series dissections.  We then extensively generalize these two sets of congruences to infinite families of divisibility properties in which the results of Thejitha and Fathima are specific instances.  
\end{abstract}

\section{Introduction and background}
\label{sec:intro}
A partition of a positive integer $n$ is a finite sequence of positive integers $\lambda = (\lambda_1, \ldots, \lambda_j)$ with $\lambda_1 + \dots + \lambda_j = n$.
The $\lambda_i$, called the parts of $\lambda$, satisfy
\begin{equation*}
\lambda_1 \ge \cdots \ge \lambda_j.
\end{equation*}  
We denote the number of partitions of $n$ by $p(n)$; for example, the partitions of $n=4$ are 
$$(4), \ \ (3,1), \ \ (2,2), \ \ (2,1,1), \ \ (1,1,1,1),$$
and this implies that $p(4) = 5$.  

As an aside, we briefly highlight the work of Srinivasa Ramanujan on congruence properties satisfied by the partition function $p(n)$ \cite{Ram1919}.  In particular, Ramanujan proved that, for all $n\geq 0$, 
\begin{align}
p(5n+4) &\equiv 0 \pmod{5}, \label{RamCongMod5} \\
p(7n+5) &\equiv 0 \pmod{7}, \text{\ \ and} \notag \\
p(11n+6) &\equiv 0 \pmod{11}.  \notag
\end{align}

With the goal of generalizing recent work of Amdeberhan and Merca \cite{AM}, Hirschhorn and the author \cite{HS2025}  defined an infinite family of functions $a_k(n)$ as the number of partitions of $n$ wherein even parts come in only one color, while the odd parts may be ``colored'' with one of $k$ colors for fixed $k\geq 1$.    Clearly, $a_1(n) = p(n)$, the unrestricted integer partition function described above, while $a_2(n) = \overline{p}(n)$, the number of overpartitions of weight $n$ \cite{CL, HS05}, and $a_3(n) = a(n)$ of Amdeberhan and Merca \cite{AM}.  

It is straightforward to see that 
\begin{equation}
\label{genfn_general}
\sum_{n=0}^\infty a_k(n)q^n =\frac{1}{(q^2;q^2)_\infty (q;q^2)_\infty^k} = \frac{f_2^{k-1}}{f_1^k}
\end{equation}
where $f_k:=(q^k;q^k)_\infty$ and 
$$(A;q)_\infty := (1-A)(1-Aq)(1-Aq^2)(1-Aq^3)\dots $$ 
is the usual $q$--Pochhammer symbol.  

In \cite{HS2025}, Hirschhorn and the author proved the following family of congruences modulo 7 via elementary techniques.  

\begin{theorem}
\label{thm:infinite_families_mod7}
For all $j\geq 0$ and all $n\geq 0$, 
\begin{align*}
a_{7j+1}(7n+5) & \equiv 0 \pmod{7},  \\
a_{7j+3}(7n+2) & \equiv 0 \pmod{7}, \\
a_{7j+4}(7n+4) & \equiv 0 \pmod{7},  \\
a_{7j+5}(7n+6) & \equiv 0 \pmod{7}, \text{\ \ and}\\
a_{7j+7}(7n+3) & \equiv 0 \pmod{7}.  
\end{align*}
\end{theorem}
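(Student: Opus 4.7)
The plan is to reduce the entire infinite family of congruences to five ``base cases'' via a Frobenius-style manipulation modulo $7$, and then to dispatch those base cases using Jacobi's triple product identity together with elementary modular arithmetic on residues of triangular numbers.

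The starting point is the congruence $f_1^7 \equiv f_7 \pmod 7$ (and analogously $f_2^7 \equiv f_{14} \pmod 7$), which follows from $(1-x)^7 \equiv 1 - x^7 \pmod 7$. Applying this to (\ref{genfn_general}) with $k = 7j+r$ gives
$$\frac{f_2^{7j+r-1}}{f_1^{7j+r}} \;=\; \frac{f_2^{r-1}}{f_1^r}\cdot\left(\frac{f_2}{f_1}\right)^{\!7j} \;\equiv\; \frac{f_2^{r-1}}{f_1^r}\cdot\frac{f_{14}^j}{f_7^j} \pmod 7.$$
Because $f_{14}^j/f_7^j$ is a power series in $q^7$, multiplication by it cannot move coefficients between distinct residue classes modulo $7$. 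Hence once the congruences are proved for the five base cases $r \in \{1,3,4,5,7\}$, they automatically extend to the whole families $7j+r$. A second application of the same trick, writing $\frac{f_2^{r-1}}{f_1^r} \equiv \frac{f_1^{7-r}f_2^{r-1}}{f_7} \pmod 7$, reduces each base case to showing that $[q^{7k+c_r}]\bigl(f_1^{7-r}f_2^{r-1}\bigr) \equiv 0 \pmod 7$ for all $k \geq 0$, since $1/f_7$ again lives only in $q^7$-powers.

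For the three clean base cases $r \in \{1,4,7\}$, both exponents $7-r$ and $r-1$ are multiples of $3$, so Jacobi's identity
$$f_1^3 \;=\; \sum_{n\geq 0}(-1)^n(2n+1)\,q^{n(n+1)/2}$$
and its $q \to q^2$ analogue $f_2^3 = \sum_m (-1)^m(2m+1)\,q^{m(m+1)}$ apply directly. In the resulting multi-sum, any term with $n \equiv 3 \pmod 7$ vanishes modulo $7$ since $2n+1 \equiv 0$; the surviving triangular numbers $T_n$ have residues in $\{0,1,3\} \pmod 7$ and their doubles $2T_m$ have residues in $\{0,2,6\} \pmod 7$. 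A direct finite check then confirms $5 \notin \{0,1,3\}+\{0,1,3\}$ (settling $r=1$), $4 \notin \{0,1,3\}+\{0,2,6\}$ (settling $r=4$), and $3 \notin \{0,2,6\}+\{0,2,6\}$ (settling $r=7$).

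The main obstacle will be the remaining cases $r \in \{3,5\}$, where the exponents $(4,2)$ and $(2,4)$ are not multiples of $3$ and so Jacobi's cubic identity does not apply directly. Here I would bring in Ramanujan's theta-function identities $\phi(-q) = f_1^2/f_2$ and $\psi(q) = f_2^2/f_1$ to rewrite $f_1^4 f_2^2 = \phi(-q)^2 f_2^4$ and $f_1^2 f_2^4 = \psi(q)^2 f_1^4$, and then either invoke known $7$-dissections of $\phi(-q)$ and $\psi(q)$, or else perform a direct $7$-dissection via Euler's pentagonal expansion $f_1 = \sum_j (-1)^j q^{j(3j-1)/2}$, carefully tracking the residues of the pentagonal exponents modulo $7$. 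In either route the final step is a more delicate residue-set check modulo $7$, similar in spirit to the clean cases but requiring substantially more bookkeeping.
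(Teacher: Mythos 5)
First, a point of comparison: the paper does not actually prove Theorem \ref{thm:infinite_families_mod7}; it quotes the result from \cite{HS2025} and cites that paper for the proof, so there is no in-text argument to measure yours against. Judged on its own terms, your reduction machinery is sound. The congruences $f_1^7\equiv f_7$ and $f_2^7\equiv f_{14}\pmod 7$ are exactly Lemma \ref{lem:FD}; the factor $f_{14}^j/f_7^j$ is a series in $q^7$ and therefore cannot mix residue classes of exponents, so the families do collapse to the base cases $r\in\{1,3,4,5,7\}$; and the rewriting $\frac{f_2^{r-1}}{f_1^r}\equiv\frac{f_1^{7-r}f_2^{r-1}}{f_7}\pmod 7$ correctly reduces each base case to a coefficient statement about $f_1^{7-r}f_2^{r-1}$. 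Your three ``clean'' cases are genuinely proved: for $n\not\equiv 3\pmod 7$ one has $T_n\in\{0,1,3\}$ and $2T_n\in\{0,2,6\}$ modulo $7$, and the sumset computations $5\notin\{0,1,3\}+\{0,1,3\}$, $4\notin\{0,1,3\}+\{0,2,6\}$, $3\notin\{0,2,6\}+\{0,2,6\}$ are all correct, settling $r=1,4,7$.

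The gap is the cases $r=3$ and $r=5$, i.e., two of the five asserted congruence families: what you offer there is a plan, not an argument. Moreover, the plan as stated will not reduce to a residue-set check of the same kind. Writing $f_1^2f_2^4=\psi(q)^2f_1^4$, the exponents of $\psi(q)^2=\sum_{m,n}q^{T_m+T_n}$ carry no vanishing weights and $T_n$ ranges over $\{0,1,3,6\}$ modulo $7$, whose sumset with itself is already all of $\mathbb{Z}/7\mathbb{Z}$; and $f_1^4$ (unlike $f_1^3$ or $f_1^6$) has no Jacobi-type weighted theta expansion. So no residue class of $q$ in $f_1^2f_2^4$ or $f_1^4f_2^2$ is missed for the ``trivial'' reason that worked in the clean cases; one genuinely needs the $7$-dissections of $\psi(q)$, $\phi(-q)$, or $f_1$ and a cancellation argument among the surviving cross terms. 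That is precisely the ``substantially more bookkeeping'' you defer, and until it is carried out the congruences $a_{7j+3}(7n+2)\equiv 0$ and $a_{7j+5}(7n+6)\equiv 0\pmod 7$ remain unproved.
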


Motivated by these results, Thejitha and Fathima \cite{TF} recently proved the following two sets of congruences satisfied by the function $a_5(n)$.

\begin{theorem}
\label{thm:TF_mod5} 
For all $n\geq 0$, 
$$a_5(5n+3) \equiv 0 \pmod{5}.$$
\end{theorem}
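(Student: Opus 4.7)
The plan is to work with the generating function
$$\sum_{n \ge 0} a_5(n) q^n = \frac{f_2^4}{f_1^5}$$
modulo $5$. Using the classical fact that $f_1^5 \equiv f_5 \pmod{5}$ (which follows from $(1-q^n)^5 \equiv 1-q^{5n} \pmod 5$ applied to each factor), one obtains
$$\frac{f_2^4}{f_1^5} \equiv \frac{f_2^4}{f_5} \pmod{5}.$$
Since $1/f_5$ is a power series in $q^5$, the coefficient of $q^{5n+3}$ in $f_2^4/f_5$ is a $\mathbb{Z}$-linear combination of coefficients of the form $[q^{5m+3}] f_2^4$. Hence it suffices to prove that $[q^{5m+3}] f_2^4 \equiv 0 \pmod{5}$ for every $m \ge 0$.

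Next I would exploit parity: since $f_2^4$ is a series in $q^2$, the coefficient $[q^{5m+3}] f_2^4$ vanishes automatically whenever $5m+3$ is odd. When $5m+3$ is even---that is, when $m = 2\ell+1$---one has $5m+3 = 10\ell+8$, and the substitution $q^2 \mapsto q$ (applied to $f_2^4 = f(q^2)$ with $f(q)=f_1^4$) yields the identity $[q^{10\ell+8}] f_2^4 = [q^{5\ell+4}] f_1^4$. The problem therefore reduces to showing
$$[q^{5\ell+4}] f_1^4 \equiv 0 \pmod 5 \qquad \text{for all } \ell \ge 0.$$

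For this last reduction, I would apply $f_1^5 \equiv f_5 \pmod 5$ once more, now in the form $f_1^4 \equiv f_5/f_1$, and then expand $f_5/f_1 = f_5 \sum_{n \ge 0} p(n) q^n$. Because $f_5$ contributes only powers of $q^5$, the coefficient of $q^{5\ell+4}$ in $f_5/f_1$ is a $\mathbb{Z}$-linear combination of values of the form $p(5(\ell-k)+4)$, each of which is divisible by $5$ by Ramanujan's classical congruence \eqref{RamCongMod5}. This closes the argument.

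The main obstacle is really just bookkeeping rather than any deep identity: the proof chains together three elementary reductions (kill odd powers, substitute $q^2 \mapsto q$, and apply $f_1^5 \equiv f_5 \pmod 5$ twice), and one must be careful with the exponent arithmetic and parity at each step. The only substantive input beyond straightforward generating function manipulation is Ramanujan's partition congruence $p(5n+4) \equiv 0 \pmod 5$, which is used as a black box at the very end. A fully self-contained variant is also available: one can instead prove $[q^{5\ell+4}] f_1^4 \equiv 0 \pmod 5$ directly by multiplying Euler's pentagonal number theorem by Jacobi's identity $f_1^3 = \sum_{k\ge 0} (-1)^k(2k+1) q^{k(k+1)/2}$ and checking that every index $(j,k)$ contributing to $q^{5\ell+4}$ forces $k \equiv 2 \pmod 5$, so that the factor $2k+1$ is divisible by $5$.
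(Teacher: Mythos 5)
Your proof is correct, and it reaches the same two essential inputs as the paper --- the binomial congruence $f_1^5\equiv f_5\pmod 5$ and Ramanujan's congruence $p(5n+4)\equiv 0\pmod 5$ --- but by a genuinely different decomposition. The paper writes $\frac{f_2^4}{f_1^5}=\frac{f_2^5}{f_1^5f_2}\equiv\frac{f_{10}}{f_5}\cdot\frac{1}{f_2}\pmod 5$, so that the factor $\frac{f_{10}}{f_5}$ is a series in $q^5$ and $\frac{1}{f_2}=\sum p(n)q^{2n}$ immediately forces $2n\equiv 3\pmod 5$, hence $n\equiv 4\pmod 5$, and Ramanujan's congruence finishes in a single step. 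You instead reduce only the denominator, obtaining $\frac{f_2^4}{f_5}$, and then need two further reductions (the parity argument killing odd exponents and the substitution $q^2\mapsto q$ turning $[q^{10\ell+8}]f_2^4$ into $[q^{5\ell+4}]f_1^4$) plus a second application of the binomial congruence to land on $f_5/f_1$. Your bookkeeping checks out at every stage: $5m+3$ is even exactly when $m=2\ell+1$, giving exponent $10\ell+8$, and the final coefficients are indeed of the form $p(5(\ell-k)+4)$. What the paper's route buys is economy --- one application of Lemma \ref{lem:FD} and no parity analysis; what your route buys is the intermediate fact $[q^{5\ell+4}]f_1^4\equiv 0\pmod 5$, which (as you note) admits a self-contained proof via Euler's pentagonal number theorem and Jacobi's identity, so your argument can be made independent of Ramanujan's congruence \eqref{RamCongMod5} entirely, whereas the paper's cannot.
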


\begin{theorem}
\label{thm:TF_mod3} 
For all $\alpha \geq 0$ and all $n\geq 0$, 
$$a_{5}\left(3^{2\alpha + 3}n + \frac{153\cdot 3^{2\alpha} - 1}{8}\right)  \equiv 0 \pmod{3}.$$
\end{theorem}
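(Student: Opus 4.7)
My plan is to establish the result by reducing modulo $3$, carrying out iterated $3$-dissections of the generating function, and then setting up an induction on $\alpha$.

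First, I would reduce the generating function modulo $3$ via the Frobenius-type congruences $f_1^3 \equiv f_3$ and $f_2^3 \equiv f_6 \pmod{3}$, which yield
$$\sum_{n \geq 0} a_5(n) q^n \;=\; \frac{f_2^4}{f_1^5} \;\equiv\; \frac{f_1 f_2 f_6}{f_3^2} \pmod{3}.$$
Since $L(\alpha) := (153 \cdot 3^{2\alpha} - 1)/8$ satisfies $L(0) = 19$, the base case asserts that $a_5(27n + 19) \equiv 0 \pmod{3}$. To prove it, I would apply a standard $3$-dissection of $f_1$ (obtained by sorting the pentagonal-number series by $k(3k-1)/2 \pmod{3}$) and a similar dissection of $f_2$, noting that $f_3$ and $f_6$ are unaffected by $3$-dissection. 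After three successive dissections of the display above, the generating function for the progression $27n + 19$ modulo $3$ should collapse to zero.

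For the inductive step, the recurrence $L(\alpha + 1) = 9 L(\alpha) + 1$ is the key structural fact, but a naive index substitution is unavailable: one cannot write $L(\alpha + 1) = 3^{2\alpha + 3} r + L(\alpha)$ for an integer $r$, since the required value is $r = 17/3$. Therefore the induction must be carried out at the level of generating functions. I would strengthen the hypothesis to an explicit identity
$$\sum_{n \geq 0} a_5\!\left(3^{2\alpha + 3} n + L(\alpha)\right) q^n \;\equiv\; 3 \, H_\alpha(q) \pmod{9},$$
where $H_\alpha(q)$ is a concrete eta-quotient built from $f_k$ with $k$ a power of $3$. The inductive step then reduces to a $9$-dissection of $H_\alpha(q)$, extraction of the appropriate residue class, and verification that the resulting series equals $3 H_{\alpha + 1}(q) \pmod{9}$.

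The main obstacle will be pinning down the precise form of $H_\alpha(q)$ so that the $9$-dissection identity closes the induction cleanly. Once the correct stable form is identified, verification of the recurrence should reduce to standard (if lengthy) $q$-series manipulations using the Jacobi triple product and elementary $3$-dissection identities; the genuine difficulty lies in guessing and proving the right self-similar structure linking $H_\alpha$ and $H_{\alpha + 1}$, since one has to track information modulo $9$ rather than merely modulo $3$ in order for the residue extraction at $q^1$ to yield the next level of the family.
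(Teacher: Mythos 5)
Your opening reduction $\sum_{n\ge 0} a_5(n)q^n \equiv f_1f_2f_6/f_3^2 \pmod{3}$ is correct and your plan for the base case $a_5(27n+19)\equiv 0\pmod 3$ by iterated $3$-dissection is the right idea; it matches the paper in substance, which packages the dissections as known identities for $f_2/f_1^2$ and $f_2^2/f_1$, finds $\sum_{n\ge 0} a_5(9n+1)q^n \equiv 2\tfrac{f_3^4}{f_6^2}\cdot\tfrac{f_2^2}{f_1}\pmod 3$ after two extractions, and then observes that the $3$-dissection of $f_2^2/f_1$ contributes no $q^{3n+2}$ terms. You have not actually executed the computation (``should collapse to zero''), but the route is sound.

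The inductive step is where your proposal has a genuine gap. From the (correct) observation that $L(\alpha+1)\neq 3^{2\alpha+3}r+L(\alpha)$ for integral $r$, you conclude that one must strengthen the induction hypothesis to a congruence modulo $9$ of the form $\sum_{n\ge 0} a_5(3^{2\alpha+3}n+L(\alpha))q^n \equiv 3H_\alpha(q)\pmod 9$. This is both unsupported (you give no candidate for $H_\alpha$ and no reason such an eta-quotient identity modulo $9$ should hold --- it is a far stronger assertion than the theorem) and unnecessary. The relevant arithmetic is $L(\alpha+1)=9L(\alpha)+1$, hence $3^{2\alpha+5}n+L(\alpha+1)=9\bigl(3^{2\alpha+3}n+L(\alpha)\bigr)+1$, and since $L(\alpha)\equiv 1\pmod 9$ while $9\mid 3^{2\alpha+3}$, every term of the progression at level $\alpha$ lies in the residue class $9m+1$. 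Consequently the single internal congruence $a_5(81n+10)\equiv a_5(9n+1)\pmod 3$, valid for all $n\ge 0$, propagates the base case through all $\alpha$ with no modulus escalation. That internal congruence is itself proved entirely modulo $3$: one shows that $\sum_{n\ge 0} a_5(27n+10)q^n$ and $\sum_{n\ge 0} a_5(3n+1)q^n$ both reduce to the same eta quotient $2f_1f_2f_6$ modulo $3$, and then replaces $n$ by $3n$. Your claim that ``one has to track information modulo $9$'' for the residue extraction is simply not the case; as written, your inductive step does not close.
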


Our initial goal in this work is to provide elementary proofs of Theorems \ref{thm:TF_mod5} and \ref{thm:TF_mod3}.  This is in stark contrast to the work of Thejitha and Fathima \cite{TF} who relied heavily on modular forms.  

In Section \ref{sec:tools}, we collect the tools necessary for proving Theorem \ref{thm:TF_mod5} and Theorem \ref{thm:TF_mod3}.  In Section \ref{sec:proofs5}, we prove Theorem \ref{thm:TF_mod5} and show that it fits naturally into an infinite family of congruences modulo 5.  In Section \ref{sec:proofs3}, we prove Theorem \ref{thm:TF_mod3} as well as infinite families of congruences modulo 3 which are naturally related to the results of Thejitha and Fathima.  All of our proofs are elementary and follow from classic results in $q$--series along with straightforward generating function manipulations.  

\section{Necessary Tools}
\label{sec:tools}

Much of our work below on the family of congruences modulo 3 relies on results found in the paper of Hirschhorn and the author \cite{HS2005}.  Using the notation of \cite{HS2005}, let 
$$D(q) = \sum_{n=-\infty}^\infty (-1)^nq^{n^2}$$ 
and 
$$Y(q) = \sum_{n=-\infty}^\infty (-1)^nq^{3n^2-2n}.$$ 
Thanks to Jacobi's Triple Product Identity \cite[(1.1.1)]{Hir}, we know that 
\begin{equation}
\label{D_prod}
D(q) = \frac{f_1^2}{f_2} 
\end{equation}
and 
\begin{equation}
\label{Y_prod}
Y(q) = \frac{f_1f_6^2}{f_2f_3}.
\end{equation}
With these in hand, we can now state the 3--dissection results that we will require in order to prove Theorem \ref{thm:TF_mod3}.  

\begin{lemma}
\label{lem:f2overf1squared_3diss} 
We have 
$$
\frac{f_2}{f_1^2} \equiv  \frac{D(q^9)^2 +2qD(q^9)Y(q^3)+q^2Y(q^3)^2}{D(q^3)} \pmod{3}.
$$
\end{lemma}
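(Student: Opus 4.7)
The plan is to start from the observation that $f_2/f_1^2 = 1/D(q)$ (by \eqref{D_prod}) and to recognize the right-hand side as a perfect square, namely
\[
\frac{D(q^9)^2 + 2qD(q^9)Y(q^3) + q^2 Y(q^3)^2}{D(q^3)} = \frac{(D(q^9) + qY(q^3))^2}{D(q^3)}.
\]
So the task reduces to proving the clean congruence
\[
\frac{1}{D(q)} \equiv \frac{(D(q^9)+qY(q^3))^2}{D(q^3)} \pmod{3}.
\]

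First I would establish the exact $3$-dissection of $D(q)$ by partitioning the sum $D(q) = \sum_{n \in \mathbb{Z}} (-1)^n q^{n^2}$ according to $n \pmod{3}$. The $n \equiv 0$ piece gives $D(q^9)$ directly. For $n = 3m+1$ the exponent is $9m^2 + 6m + 1$ and the sign is $-(-1)^m$, while for $n = 3m-1$ the exponent is $9m^2 - 6m + 1$ with the same sign; the substitution $m \mapsto -m$ shows these two pieces are equal, and each equals $-qY(q^3)$ after matching the definition $Y(q) = \sum_n (-1)^n q^{3n^2 - 2n}$. This produces the identity
\[
D(q) = D(q^9) - 2qY(q^3).
\]

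Next I would reduce modulo $3$: since $-2 \equiv 1 \pmod{3}$, the dissection becomes
\[
D(q) \equiv D(q^9) + qY(q^3) \pmod{3},
\]
so squaring gives $(D(q^9)+qY(q^3))^2 \equiv D(q)^2 \pmod{3}$. Then I would invoke the Freshman's dream $f_1^3 \equiv f_3 \pmod{3}$, which via \eqref{D_prod} yields $D(q)^3 = f_1^6/f_2^3 \equiv f_3^2/f_6 = D(q^3) \pmod{3}$. Combining the last two congruences (which is legitimate since $D(q)$ and $D(q^3)$ have constant term $1$, hence are invertible as power series mod $3$):
\[
\frac{1}{D(q)} = \frac{D(q)^2}{D(q)^3} \equiv \frac{(D(q^9)+qY(q^3))^2}{D(q^3)} \pmod{3},
\]
and expanding the square recovers the stated form.

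The only real subtlety is Step~1, the $3$-dissection of $D(q)$: one must correctly pair the $n \equiv 1$ and $n \equiv -1$ residue classes via $m \mapsto -m$ and then recognize that the resulting theta series on the even exponents $9m^2 \mp 6m$ is exactly $Y(q^3)$. Once that is in hand, everything else is the Freshman's dream plus bookkeeping, and the lemma falls out in a couple of lines.
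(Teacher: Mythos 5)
Your proof is correct: the $3$-dissection $D(q)=D(q^9)-2qY(q^3)$ checks out, and combining $D(q)^2\equiv(D(q^9)+qY(q^3))^2$ with $D(q)^3=f_1^6/f_2^3\equiv f_3^2/f_6=D(q^3)\pmod 3$ gives the lemma exactly as stated. The paper offers no proof of its own here --- it simply cites \cite{HS2005} --- and your argument is essentially the standard derivation found there (dissect $D(q)$ into residue classes mod $3$, then use $1/D(q)=D(q)^2/D(q)^3$ together with the Freshman's dream), so you have in effect supplied the proof the paper outsources.
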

\begin{proof}
See Hirschhorn and the author \cite{HS2005}.  
\end{proof}

\begin{lemma}
\label{lem:psi_3diss}
We have 
$$
\frac{f_2^2}{f_1} = \frac{f_6f_9^2}{f_3f_{18}} +q\frac{f_{18}^2}{f_9}.  
$$
\end{lemma}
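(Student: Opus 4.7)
The plan is to recognize $f_2^2/f_1$ as Ramanujan's theta function $\psi(q) = \sum_{n \geq 0} q^{n(n+1)/2}$ (a direct consequence of Jacobi's Triple Product Identity) and then derive its classical $3$-dissection by splitting the summation according to $n \pmod 3$. A quick check shows that $n(n+1)/2 \pmod 3$ equals $0$ when $n \equiv 0, 2 \pmod 3$ and equals $1$ when $n \equiv 1 \pmod 3$, which already guarantees the right-hand side has the shape $A(q^3) + q\, B(q^3)$ claimed in the lemma.

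The easy half is the $n = 3m+1$ contribution: since $(3m+1)(3m+2)/2 = 1 + 9m(m+1)/2$, it equals
$$q \sum_{m \geq 0} q^{9m(m+1)/2} = q\, \psi(q^9) = q\, \frac{f_{18}^2}{f_9},$$
matching the second summand on the right.

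The more interesting half is the $n \equiv 0, 2 \pmod 3$ contribution. Writing out the $n = 3m$ terms (for $m \geq 0$) and the $n = 3m+2$ terms (for $m \geq 0$) and then re-indexing $m \mapsto -m-1$ in the latter, both families produce exponents of the form $3m(3m+1)/2$, the first covering $m \geq 0$ and the second covering $m \leq -1$. So this half collapses into the bilateral theta series
$$\sum_{m \in \mathbb{Z}} q^{3m(3m+1)/2} = f(q^6, q^3)$$
in Ramanujan's notation. Jacobi's Triple Product Identity then yields
$$f(q^6, q^3) = (-q^3;q^9)_\infty\, (-q^6;q^9)_\infty\, (q^9;q^9)_\infty.$$

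The final step is a product simplification. Applying $(-x;q)_\infty = (x^2;q^2)_\infty/(x;q)_\infty$ to each of the two negative Pochhammer factors and using the telescopings $(q^3;q^9)_\infty (q^6;q^9)_\infty (q^9;q^9)_\infty = f_3$ and $(q^6;q^{18})_\infty (q^{12};q^{18})_\infty (q^{18};q^{18})_\infty = f_6$ should collapse the expression to $f_6 f_9^2/(f_3 f_{18})$, completing the proof. I expect the only obstacle to be bookkeeping in this last product-chasing step; the genuine content of the argument is the bilateral-sum trick that repackages the two non-trivial residue classes as a single Ramanujan theta series.
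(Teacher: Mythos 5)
Your proposal is correct, and it supplies an argument where the paper supplies none: the paper simply cites Hirschhorn's book \cite[(14.3.3)]{Hir} for this identity, so there is no in-paper proof to compare against. Your derivation is the standard one for that cited result. Each step checks out: $f_2^2/f_1 = \psi(q) = \sum_{n\ge 0} q^{n(n+1)/2}$ by the triple product; the residue computation ($n(n+1)/2 \equiv 0 \pmod 3$ for $n \equiv 0,2$ and $\equiv 1$ for $n\equiv 1$) is right; the $n=3m+1$ class gives $q\,\psi(q^9) = q f_{18}^2/f_9$; and the reindexing $m \mapsto -m-1$ correctly fuses the $n=3m$ and $n=3m+2$ classes into $\sum_{m\in\mathbb{Z}} q^{3m(3m+1)/2} = f(q^6,q^3) = (-q^3;q^9)_\infty(-q^6;q^9)_\infty(q^9;q^9)_\infty$. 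The final product chase also closes: using $(q^3;q^9)_\infty(q^6;q^9)_\infty = f_3/f_9$ and $(q^6;q^{18})_\infty(q^{12};q^{18})_\infty = f_6/f_{18}$, the product collapses to $f_6 f_9^2/(f_3 f_{18})$ exactly as claimed. The only thing you leave implicit that is worth writing out is that last pair of telescopings, but there is no gap.
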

\begin{proof}
See Hirschhorn \cite[(14.3.3)]{Hir}.
\end{proof}


To close this section, we note a pivotal congruence result which follows from the Binomial Theorem and congruence properties of certain binomial coefficients.  
\begin{lemma} 
\label{lem:FD} 
For a prime $p$ and positive integers $a$ and $b$, we have 
$$
f_a^{bp} \equiv f_{ap}^b \pmod{p}.  
$$
\end{lemma}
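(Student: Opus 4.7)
The plan is to reduce the statement to the $b=1$ case and then apply the Frobenius-type identity $(1-x)^p \equiv 1-x^p \pmod{p}$ factor by factor in the product defining $f_a$.

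First I would observe that since $f_a^{bp} = (f_a^p)^b$, it suffices to establish the base case
\[
f_a^p \equiv f_{ap} \pmod{p};
\]
raising both sides to the $b$-th power then yields $f_a^{bp} \equiv f_{ap}^b \pmod{p}$, which is the claim. So the substance of the argument lies entirely in the case $b=1$.

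To handle the base case, I would expand $(1-q^{an})^p$ using the binomial theorem for each $n \geq 1$:
\[
(1 - q^{an})^p = \sum_{j=0}^{p} \binom{p}{j} (-q^{an})^j.
\]
For $1 \leq j \leq p-1$, the binomial coefficient $\binom{p}{j}$ is divisible by $p$, so modulo $p$ only the endpoints $j=0$ and $j=p$ survive, giving
\[
(1-q^{an})^p \equiv 1 + (-1)^p q^{anp} \pmod{p}.
\]
Since $(-1)^p \equiv -1 \pmod{p}$ for every prime $p$ (the case $p=2$ holding because $1 \equiv -1 \pmod{2}$), this simplifies to $(1-q^{an})^p \equiv 1 - q^{anp} \pmod{p}$. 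Multiplying the resulting congruences over all $n \geq 1$ formally in the power series ring $\mathbb{Z}[[q]]$ then gives
\[
f_a^p = \prod_{n \geq 1}(1-q^{an})^p \equiv \prod_{n \geq 1}(1-q^{anp}) = f_{ap} \pmod{p}.
\]

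There is no real obstacle to this argument; the only mild point of care is handling $p=2$ uniformly with odd primes when taking $(-1)^p$, which is dispatched by a one-line observation. The infinite product manipulation is legitimate because for each fixed exponent of $q$, only finitely many factors contribute, so the congruence holds coefficientwise in $\mathbb{Z}[[q]]$.
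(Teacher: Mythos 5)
Your proof is correct and follows exactly the route the paper indicates: the paper states this lemma as a consequence of the Binomial Theorem and the divisibility of $\binom{p}{j}$ by $p$ for $1\leq j\leq p-1$, which is precisely the argument you carry out in detail (including the reduction to $b=1$ and the coefficientwise justification of the infinite product step). No issues.
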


\section{Elementary Proofs of Theorem \ref{thm:TF_mod5} and a Natural Generalization}
\label{sec:proofs5}

In this brief section, we prove Theorem \ref{thm:TF_mod5} as well as an infinite generalization of the result.   

\begin{proof}[Proof of Theorem \ref{thm:TF_mod5}]
Note that 
\begin{align*}
\sum_{n=0}^\infty a_5(n)q^n 
&=
\frac{f_2^4}{f_1^5} 
= 
\frac{f_2^5}{f_1^5f_2} \\
&\equiv 
\frac{f_{10}}{f_{5}}\cdot \frac{1}{f_2} \pmod{5}\\
&= 
\frac{f_{10}}{f_{5}}\sum_{n=0}^\infty p(n)q^{2n}.
\end{align*}
In order to consider $a_5(5n+3)$ modulo 5 on the left--hand side above, we need to identify powers of $q$ on both sides which are congruent to $3 \pmod{5}$.  This means we need to have $2n\equiv 3 \pmod{5}$ which is equivalent to saying $n\equiv 4 \pmod{5}$.  Thus, on the right--hand side of the congruence above, each relevant term will have a coefficient which contains $p(5n+4)$ as a factor for some $n$.  Thanks to \eqref{RamCongMod5}, each of these values is divisible by 5, and this implies our result.  
\end{proof}

In a manner similar to that which was highlighted in \cite{HS2025}, we note that Theorem \ref{thm:TF_mod5} can be extended to an infinite family of results in the following way.  

\begin{corollary}
\label{cor:infinite_family_mod5}
For all $j\geq 0$ and all $n\geq 0$, 
\begin{align*}
a_{5j+5}(5n+3) &\equiv 0 \pmod{5}.
\end{align*}
\end{corollary}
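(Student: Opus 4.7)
The plan is to mimic exactly the proof of Theorem \ref{thm:TF_mod5}, picking up only one extra factor of the form $(f_{10}/f_5)^j$ which is a series in $q^5$ and therefore harmless for the argument.

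Starting from the generating function \eqref{genfn_general} with $k = 5j+5$, I would write
$$
\sum_{n=0}^\infty a_{5j+5}(n)q^n = \frac{f_2^{5j+4}}{f_1^{5j+5}} = \frac{f_2^{5(j+1)}}{f_1^{5(j+1)}}\cdot \frac{1}{f_2},
$$
and then apply Lemma \ref{lem:FD} with $p=5$ separately in numerator and denominator to obtain
$$
\sum_{n=0}^\infty a_{5j+5}(n)q^n \equiv \frac{f_{10}^{\,j+1}}{f_5^{\,j+1}}\cdot \frac{1}{f_2} \pmod{5}.
$$

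Next I would observe that $f_{10}^{\,j+1}/f_5^{\,j+1}$ is a power series in $q^5$, so multiplying by it does not move any monomial into a different residue class modulo $5$ in the exponent. Expanding the remaining factor as $1/f_2 = \sum_{m\ge 0} p(m) q^{2m}$, the coefficient of $q^{5n+3}$ on the right-hand side is a $\mathbb{Z}$-linear combination of values $p(m)$ where $2m \equiv 3 \pmod{5}$, i.e.\ $m \equiv 4 \pmod{5}$. By Ramanujan's congruence \eqref{RamCongMod5}, every such $p(m)$ is divisible by $5$, which yields the claimed divisibility.

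The entire argument is routine; there is no real obstacle, since the case $j=0$ is Theorem \ref{thm:TF_mod5} itself and the only new ingredient is the invocation of Lemma \ref{lem:FD} to absorb the full $5(j+1)$-th power in both $f_1$ and $f_2$. The mild point to be careful about is bookkeeping: one must group the exponents as $5(j+1)$ in both the numerator and the denominator before applying Lemma \ref{lem:FD}, leaving exactly one stray factor of $1/f_2$ to which Ramanujan's $p(5n+4) \equiv 0 \pmod 5$ can be applied.
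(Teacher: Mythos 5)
Your proof is correct and takes essentially the same route as the paper: both rest on Lemma \ref{lem:FD} to convert the fifth powers into a series in $q^5$ and then reduce to Ramanujan's congruence $p(5n+4)\equiv 0 \pmod{5}$. The only (cosmetic) difference is that you absorb all $5(j+1)$ powers at once and re-derive the $k=5$ case inline, whereas the paper peels off $f_2^{5j}/f_1^{5j}$ and cites Theorem \ref{thm:TF_mod5} as a black box.
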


\begin{proof}
Note that for any $j\geq 0$, 
\begin{align*}
\sum_{n=0}^\infty a_{5j+5}(n)q^n
&= 
\frac{f_2^{5j+4}}{f_1^{5j+5}} 
= 
\frac{f_2^{5j}}{f_1^{5j}}\cdot \frac{f_2^{4}}{f_1^{5}} \\
&\equiv 
\frac{f_{10}^{j}}{f_5^{j}}\cdot \frac{f_2^{4}}{f_1^{5}} \pmod{5} \\
&= 
\frac{f_{10}^{j}}{f_5^{j}}\sum_{n=0}^\infty a_{5}(n)q^n.
\end{align*}
The result then follows thanks to Theorem \ref{thm:TF_mod5} and the fact that $\frac{f_{10}^{j}}{f_5^{j}}$ is a function of $q^5$.  
\end{proof}

\section{Elementary Proofs of Theorem \ref{thm:TF_mod3} and Related Results}
\label{sec:proofs3}

We begin this section by providing an elementary proof of Theorem  \ref{thm:TF_mod3}.

\begin{proof}[Proof of Theorem  \ref{thm:TF_mod3}]
Thanks to \eqref{genfn_general} we know 
\begin{align*}
\sum_{n=0}^\infty a_5(n)q^n 
&= 
\frac{f_2^4}{f_1^5} = 
\frac{f_2^3}{f_1^3}\cdot \frac{f_2}{f_1^2} \\
&\equiv 
\frac{f_6}{f_3}\cdot \frac{f_2}{f_1^2} \pmod{3}  \\
&\equiv 
\frac{f_6}{f_3}\left( \frac{D(q^9)^2 +2qD(q^9)Y(q^3)+q^2Y(q^3)^2}{D(q^3)}  \right) \pmod{3} 
\end{align*}
using Lemma \ref{lem:f2overf1squared_3diss}.  
Thus, 
\begin{align*}
\sum_{n=0}^\infty a_5(3n+1)q^{3n+1} 
&\equiv 
2q\frac{f_6}{f_3}\left( \frac{D(q^9)Y(q^3)}{D(q^3)}  \right) \pmod{3}  \\
&= 
2q\frac{f_6f_9f_{18}}{f_3^2}
\end{align*}
using \eqref{D_prod} and \eqref{Y_prod} and simplifying the result.  This means that
\begin{align}
\sum_{n=0}^\infty a_5(3n+1)q^{n} 
&\equiv 
2f_3f_{6} \frac{f_2}{f_1^2} \pmod{3} \notag \\
&\equiv
2f_1f_2f_6 \pmod{3}.  \label{3n1_mod3}
\end{align}
Again using Lemma \ref{lem:f2overf1squared_3diss}, we see that 
\begin{align*}
\sum_{n=0}^\infty a_5(9n+1)q^{3n} 
&\equiv 
2f_3f_{6} \frac{D(q^9)^2}{D(q^3)} \pmod{3}
\end{align*}
or 
\begin{align*}
\sum_{n=0}^\infty a_5(9n+1)q^{n} 
&\equiv 
2f_1f_{2}\frac{D(q^3)^2}{D(q)} \pmod{3} \\
&= 
2f_1f_2\left( \frac{f_3^2}{f_6}\right)^2 \left( \frac{f_2}{f_1^2} \right)  = 
2\frac{f_3^4}{f_6^2}\left( \frac{f_2^2}{f_1} \right).
\end{align*}
Thanks to Lemma \ref{lem:psi_3diss}, we then know that 
\begin{align*}
\sum_{n=0}^\infty a_5(9n+1)q^{n} 
&\equiv 
2\frac{f_3^4}{f_6^2}\left( \frac{f_6f_9^2}{f_3f_{18}} +q\frac{f_{18}^2}{f_9} \right) \pmod{3}.
\end{align*}
Note that, when the above expression is expanded, it is not possible to obtain any terms involving powers of the form $q^{3n+2}$ for any $n$.  This means that, for all $n\geq 0$, 
\begin{equation*}
\label{27n19_mod3} 
a_5(9(3n+2)+1) = a_5(27n+19) \equiv 0 \pmod{3}. 
\end{equation*}
This is the $\alpha=0$ case of Theorem \ref{thm:TF_mod3}.  

We next wish to show that, for all $n\geq 0$, $a_5(81n+10)\equiv a_5(9n+1) \pmod{3}.$  Returning to the work we completed above, and noting that $a_5(9(3n+1)+1) = a_5(27n+10)$, we have 
\begin{align*}
\sum_{n=0}^\infty a_5(27n+10)q^{3n+1} 
&\equiv 
2\frac{f_3^4}{f_6^2}\left(q\frac{f_{18}^2}{f_9} \right) \pmod{3} \\
&\equiv 
2q\frac{f_3^4f_6^6}{f_3^3f_6^2}\pmod{3} \\
&= 
2qf_3f_6^4.
\end{align*}
Therefore, we know 
\begin{align*}
\sum_{n=0}^\infty a_5(27n+10)q^{n} 
&\equiv 
2f_1f_2^4 \pmod{3} \\
&\equiv 
2f_1f_2f_6 \pmod{3}.
\end{align*}
Thanks to \eqref{3n1_mod3}, we then see that, for all $n\geq 0$, 
$$
a_5(27n+10) \equiv a_5(3n+1) \pmod{3}.
$$
Replacing $n$ by $3n$ on both sides of this congruence yields $$
a_5(81n+10) \equiv a_5(9n+1) \pmod{3}
$$
and this is the congruence used by Thejitha and Fathima \cite{TF} to finalize their induction proof of Theorem \ref{thm:TF_mod3} for all $\alpha\geq 0$.  
\end{proof}

We now place Theorem \ref{thm:TF_mod3} within a larger context.  For the moment, we focus on the $\alpha=0$ case of the theorem, which states that, for all $n\geq 0$, $a_5(27n+19) \equiv 0 \pmod{3}$.  Interestingly, as noted above, $a_2(n) = \overline{p}(n)$, the number of overpartitions of weight $n$.  As proven in \cite[Theorem 2.1]{HS2005}, it is the case that,  for all $n\geq 0$, $a_2(27n+18) \equiv 0 \pmod{3}$.
It turns out that these two divisibility properties modulo 3 are specific examples of a more extensive theorem.  

\begin{theorem}
\label{thm:mod3_divisibilities}
For $0\leq t\leq 8$ and all $n\geq 0$, 
$$a_{3t+2}(27n+(18+t)) \equiv 0 \pmod{3}.$$ 
\end{theorem}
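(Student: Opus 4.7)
My plan is to write $t = 3s + t'$ with $s, t' \in \{0, 1, 2\}$ and to perform two successive three-dissections of the generating function modulo $3$.

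By Lemma \ref{lem:FD}, the generating function satisfies
\begin{equation*}
\sum_{n=0}^\infty a_{3t+2}(n) q^n = \frac{f_2^{3t+1}}{f_1^{3t+2}} \equiv \frac{f_6^t}{f_3^t} \cdot \frac{f_2}{f_1^2} \pmod{3},
\end{equation*}
and I would first apply Lemma \ref{lem:f2overf1squared_3diss} to $\frac{f_2}{f_1^2}$ and extract the subseries with exponent congruent to $t' \pmod{3}$ (the residue of $18 + t$). Dividing by $q^{t'}$ and replacing $q^3$ by $q$, a short calculation using \eqref{D_prod}, \eqref{Y_prod}, together with $f_1^3 \equiv f_3$ and $f_2^3 \equiv f_6 \pmod{3}$ from Lemma \ref{lem:FD}, produces
\begin{equation*}
\sum_{m=0}^\infty a_{3t+2}(3m + t') q^m \equiv E_{s, t'}(q^3) \cdot \frac{f_2}{f_1^2} \pmod{3},
\end{equation*}
where $E_{s, t'}(q^3)$ is an explicit ratio of powers of $f_3, f_6, f_9, f_{18}$, hence a series in $q^3$.

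I would then apply Lemma \ref{lem:f2overf1squared_3diss} a second time and extract the subseries with exponent congruent to $s \pmod{3}$ (the residue of $(18 + t - t')/3 = 6 + s$). Another round of the same simplifications collapses the result, in every one of the nine cases, to
\begin{equation*}
\sum_{n=0}^\infty a_{3t+2}(9n + 3s + t') q^n \equiv G_{s, t'}(q^3) \cdot \eta_{t'}(q) \pmod{3},
\end{equation*}
where $G_{s, t'}(q^3)$ is a series in $q^3$ and
\begin{equation*}
\eta_0(q) = D(q) = \frac{f_1^2}{f_2}, \qquad \eta_1(q) = \eta_2(q) = \frac{f_2^2}{f_1}.
\end{equation*}
The key point is that $D(q) = \sum_n (-1)^n q^{n^2}$ has no $q^{3n+2}$ term (because $n^2 \not\equiv 2 \pmod{3}$), and $\frac{f_2^2}{f_1}$ has no $q^{3n+2}$ term by Lemma \ref{lem:psi_3diss}; multiplying either by a series in $q^3$ preserves this vanishing. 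Consequently the coefficient of $q^{3n+2}$ on the right-hand side is zero modulo $3$ for every $n \geq 0$, which gives
\begin{equation*}
a_{3t+2}\bigl(9(3n+2) + 3s + t'\bigr) = a_{3t+2}(27n + 18 + t) \equiv 0 \pmod{3}.
\end{equation*}

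The main obstacle will be the case-by-case algebra verifying that the second dissection always outputs the factor $D(q)$ or $\frac{f_2^2}{f_1}$. This rests on the three clean identities $\frac{D(q^3)^2}{D(q)} = \frac{f_2 f_3^4}{f_1^2 f_6^2}$, $\frac{D(q^3) Y(q)}{D(q)} = \frac{f_3 f_6}{f_1}$, and $\frac{Y(q)^2}{D(q)} = \frac{f_6^4}{f_2 f_3^2}$, combined with repeated use of $f_1^3 \equiv f_3$ and $f_2^3 \equiv f_6 \pmod{3}$ to clear cube-powers from the denominators and expose either $\frac{f_1^2}{f_2}$ or $\frac{f_2^2}{f_1}$ as the non-$q^3$ factor. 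The restriction $t \leq 8$ in the theorem statement is exactly what is needed so that $s, t' \in \{0, 1, 2\}$ and only two dissections are required.
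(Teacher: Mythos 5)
Your proposal follows essentially the same route as the paper's proof: two successive applications of Lemma \ref{lem:f2overf1squared_3diss} together with the reductions $f_1^3\equiv f_3$ and $f_2^3\equiv f_6 \pmod 3$, ending in each of the nine cases with a series in $q^3$ times a factor having no $q^{3n+2}$ terms (the paper always normalizes this terminal factor to $\frac{f_2^2}{f_1}$ and invokes Lemma \ref{lem:psi_3diss}, whereas you allow $D(q)$ as well, but these differ only by the $q^3$-series factor $\frac{f_3}{f_6}$ modulo $3$, so the arguments coincide). The paper likewise carries out the case-by-case algebra you identify as the remaining work, treating $t=0$ by citation and $t=1$ through $t=8$ explicitly.
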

Note that the $t=0$ and $t=1$ cases of this theorem correspond to the congruences modulo 3 mentioned above for $a_2$ and $a_5$, respectively.  

\begin{proof}
Generally speaking, each of the eight proofs of the results above follows in a fashion similar to the proof given above for the congruence $a_5(27n+19) \equiv 0 \pmod{3}$.  

\bigskip  

\noindent 
{\it Proof that $a_{8}(27n+20) \equiv 0 \pmod{3}$}

\bigskip 

\noindent 
From our work above, we know 
\begin{align*}
\sum_{n=0}^\infty a_8(n)q^n 
&= 
\frac{f_2^7}{f_1^8} = 
\frac{f_2^6}{f_1^6}\cdot \frac{f_2}{f_1^2} \\
&\equiv 
\frac{f_{6}^2}{f_3^2}\cdot \frac{f_2}{f_1^2} \pmod{3}  \\
&\equiv 
\frac{f_6^2}{f_3^2}\left( \frac{D(q^9)^2 +2qD(q^9)Y(q^3)+q^2Y(q^3)^2}{D(q^3)}  \right) \pmod{3} 
\end{align*}
using Lemma \ref{lem:f2overf1squared_3diss}.  
Thus, 
\begin{align*}
\sum_{n=0}^\infty a_8(3n+2)q^{3n+2} 
&\equiv 
\frac{f_6^2}{f_3^2}\left( q^2\frac{Y(q^3)^2}{D(q^3)}  \right) \pmod{3}  \\
&= 
q^2\frac{f_6f_{18}^4}{f_3^2f_9^2}
\end{align*}
using \eqref{D_prod} and \eqref{Y_prod} and simplifying the result.  This means that
\begin{align*}
\sum_{n=0}^\infty a_8(3n+2)q^{n} 
&\equiv 
\frac{f_2f_{6}^4}{f_1^2f_3^2} \pmod{3}.
\end{align*}
Again using Lemma \ref{lem:f2overf1squared_3diss}, we see that 
\begin{align*}
\sum_{n=0}^\infty a_8(9n+2)q^{3n} 
&\equiv 
\frac{f_6^4}{f_3^2} \frac{D(q^9)^2}{D(q^3)} \pmod{3}
\end{align*}
or 
\begin{align*}
\sum_{n=0}^\infty a_8(9n+2)q^{n} 
&\equiv 
\frac{f_2^4}{f_1^2} \frac{D(q^3)^2}{D(q)} \pmod{3} \\
&= 
\frac{f_2^4}{f_1^2} \left(\frac{f_3^2}{f_6}\right)^2\frac{f_2}{f_1^2} 
= 
\frac{f_2^3f_3^4}{f_6^2f_1^3}\frac{f_2^2}{f_1} \\
&\equiv 
\frac{f_3^3}{f_6}\frac{f_2^2}{f_1} \pmod{3}.
\end{align*}
Thanks to Lemma \ref{lem:psi_3diss}, we then know that 
\begin{align*}
\sum_{n=0}^\infty a_8(9n+2)q^{n} 
&\equiv 
\frac{f_3^3}{f_6}\left( \frac{f_6f_9^2}{f_3f_{18}} +q\frac{f_{18}^2}{f_9} \right) \pmod{3}.
\end{align*}
As above, this means that, for all $n\geq 0$, 
\begin{equation*}
\label{27n19_mod3} 
a_8(9(3n+2)+2) = a_8(27n+20) \equiv 0 \pmod{3}. 
\end{equation*}

\bigskip  

\noindent
{\it Proof that $a_{11}(27n+21) \equiv 0 \pmod{3}$}

\bigskip 

\noindent 
We know 
\begin{align*}
\sum_{n=0}^\infty a_{11}(n)q^n 
&= 
\frac{f_2^{10}}{f_1^{11}} = 
\frac{f_2^9}{f_1^9}\cdot \frac{f_2}{f_1^2} \\
&\equiv 
\frac{f_{6}^3}{f_3^3}\cdot \frac{f_2}{f_1^2} \pmod{3}  \\
&\equiv 
\frac{f_6^3}{f_3^3}\left( \frac{D(q^9)^2 +2qD(q^9)Y(q^3)+q^2Y(q^3)^2}{D(q^3)}  \right) \pmod{3} 
\end{align*}
using Lemma \ref{lem:f2overf1squared_3diss}.  
Thus, 
\begin{align*}
\sum_{n=0}^\infty a_{11}(3n)q^{3n} 
&\equiv 
\frac{f_6^3}{f_3^3}\left( \frac{D(q^9)^2}{D(q^3)}  \right) \pmod{3}  \\
&= 
\frac{f_6^4f_9^4}{f_3^5f_{18}^2}
\end{align*}
using \eqref{D_prod} and simplifying the result.  This means that
\begin{align*}
\sum_{n=0}^\infty a_{11}(3n)q^{n} 
&\equiv 
\frac{f_2^4f_3^4}{f_1^5f_{6}^2} \pmod{3} \\
&\equiv 
\frac{f_3^3}{f_6}\frac{f_2}{f_1^2} \pmod{3}.
\end{align*}
Again using Lemma \ref{lem:f2overf1squared_3diss}, we see that 
\begin{align*}
\sum_{n=0}^\infty a_{11}(9n+3)q^{3n+1} 
&\equiv 
\frac{f_3^3}{f_6} \left( \frac{2qD(q^9)Y(q^3)}{D(q^3)} \right) \pmod{3}
\end{align*}
or 
\begin{align*}
\sum_{n=0}^\infty a_{11}(9n+3)q^{n} 
&\equiv 
2\frac{f_1^2f_3f_6}{f_2} \pmod{3} \\
&\equiv 
2f_3^2\frac{f_2^2}{f_1} \pmod{3}.
\end{align*}
Thanks to Lemma \ref{lem:psi_3diss}, and using the same logic as above, we see that, for all $n\geq 0$, 
\begin{equation*}
a_{11}(9(3n+2)+3) = a_{11}(27n+21) \equiv 0 \pmod{3}. 
\end{equation*}

\bigskip  

\noindent
{\it Proof that $a_{14}(27n+22) \equiv 0 \pmod{3}$}

\bigskip 

\noindent 
We know 
\begin{align*}
\sum_{n=0}^\infty a_{14}(n)q^n 
&= 
\frac{f_2^{13}}{f_1^{14}} = 
\frac{f_2^{12}}{f_1^{12}}\cdot \frac{f_2}{f_1^2} \\
&\equiv 
\frac{f_{6}^4}{f_3^4}\cdot \frac{f_2}{f_1^2} \pmod{3}  \\
&\equiv 
\frac{f_6^4}{f_3^4}\left( \frac{D(q^9)^2 +2qD(q^9)Y(q^3)+q^2Y(q^3)^2}{D(q^3)}  \right) \pmod{3} 
\end{align*}
using Lemma \ref{lem:f2overf1squared_3diss}.  
Thus, 
\begin{align*}
\sum_{n=0}^\infty a_{14}(3n+1)q^{3n+1} 
&\equiv 
\frac{f_6^4}{f_3^4}\left( \frac{2qD(q^9)Y(q^3)}{D(q^3)} \right) \pmod{3}  \\
&\equiv 
2q\frac{f_6^4f_9f_{18}}{f_3^5} \pmod{3}.
\end{align*}
This means that
\begin{align*}
\sum_{n=0}^\infty a_{14}(3n+1)q^{n} 
&\equiv 
2\frac{f_2^4f_3f_{6}}{f_1^5} \pmod{3} \\
&\equiv 
2f_6^2 \frac{f_2}{f_1^2} \pmod{3}.
\end{align*}
Again using Lemma \ref{lem:f2overf1squared_3diss}, we see that 
\begin{align*}
\sum_{n=0}^\infty a_{14}(9n+4)q^{3n+1} 
&\equiv 
2f_6^2 \left( \frac{2qD(q^9)Y(q^3)}{D(q^3)} \right) \pmod{3}
\end{align*}
or 
\begin{align*}
\sum_{n=0}^\infty a_{14}(9n+4)q^{n} 
&\equiv 
f_3f_6\frac{f_2^2}{f_1} \pmod{3}.
\end{align*}
Using the same logic as above, we see that, for all $n\geq 0$, 
\begin{equation*}
a_{14}(9(3n+2)+4) = a_{14}(27n+22) \equiv 0 \pmod{3}. 
\end{equation*}

\bigskip  

\noindent
{\it Proof that $a_{17}(27n+23) \equiv 0 \pmod{3}$}

\bigskip 

\noindent 
We know 
\begin{align*}
\sum_{n=0}^\infty a_{17}(n)q^n 
&= 
\frac{f_2^{16}}{f_1^{17}} = 
\frac{f_2^{15}}{f_1^{15}}\cdot \frac{f_2}{f_1^2} \\
&\equiv 
\frac{f_{6}^5}{f_3^5}\cdot \frac{f_2}{f_1^2} \pmod{3}  \\
&\equiv 
\frac{f_6^5}{f_3^5}\left( \frac{D(q^9)^2 +2qD(q^9)Y(q^3)+q^2Y(q^3)^2}{D(q^3)}  \right) \pmod{3} 
\end{align*}
using Lemma \ref{lem:f2overf1squared_3diss}.  
Thus, 
\begin{align*}
\sum_{n=0}^\infty a_{17}(3n+2)q^{3n+2} 
&\equiv 
\frac{f_6^5}{f_3^5}\left( \frac{q^2Y(q^3)^2}{D(q^3)} \right) \pmod{3}  \\
&\equiv 
q^2\frac{f_6^4f_{18}^4}{f_3^5f_9^2} \pmod{3}.
\end{align*}
This means that
\begin{align*}
\sum_{n=0}^\infty a_{17}(3n+2)q^{n} 
&\equiv 
\frac{f_2^4f_{6}^4}{f_1^5f_3^2} \pmod{3} \\
&\equiv 
\frac{f_6^5}{f_3^3}\frac{f_2}{f_1^2} \pmod{3}.
\end{align*}
Again using Lemma \ref{lem:f2overf1squared_3diss}, we see that 
\begin{align*}
\sum_{n=0}^\infty a_{17}(9n+5)q^{3n+1} 
&\equiv 
\frac{f_6^5}{f_3^3} \left( \frac{2qD(q^9)Y(q^3)}{D(q^3)} \right) \pmod{3}
\end{align*}
or 
\begin{align*}
\sum_{n=0}^\infty a_{17}(9n+5)q^{n} 
&\equiv 
2f_6^2\frac{f_2^2}{f_1} \pmod{3}.
\end{align*}
Using the same logic as above, we see that, for all $n\geq 0$, 
\begin{equation*}
a_{17}(9(3n+2)+5) = a_{17}(27n+23) \equiv 0 \pmod{3}. 
\end{equation*}

\bigskip  

\noindent
{\it Proof that $a_{20}(27n+24) \equiv 0 \pmod{3}$}

\bigskip 

\noindent 
We know 
\begin{align*}
\sum_{n=0}^\infty a_{20}(n)q^n 
&= 
\frac{f_2^{19}}{f_1^{20}} = 
\frac{f_2^{18}}{f_1^{18}}\cdot \frac{f_2}{f_1^2} \\
&\equiv 
\frac{f_{6}^6}{f_3^6}\cdot \frac{f_2}{f_1^2} \pmod{3}  \\
&\equiv 
\frac{f_6^6}{f_3^6}\left( \frac{D(q^9)^2 +2qD(q^9)Y(q^3)+q^2Y(q^3)^2}{D(q^3)}  \right) \pmod{3} 
\end{align*}
using Lemma \ref{lem:f2overf1squared_3diss}.  
Thus, 
\begin{align*}
\sum_{n=0}^\infty a_{20}(3n)q^{3n} 
&\equiv 
\frac{f_6^6}{f_3^6}\left( \frac{D(q^9)^2}{D(q^3)} \right) \pmod{3}  \\
&\equiv 
\frac{f_6^7f_{9}^4}{f_3^8f_{18}^2} \pmod{3}.
\end{align*}
This means that
\begin{align*}
\sum_{n=0}^\infty a_{20}(3n)q^{n} 
&\equiv 
\frac{f_2^7f_{3}^4}{f_1^8f_{6}^2} \pmod{3} \\
&\equiv 
f_3^2\frac{f_2}{f_1^2} \pmod{3}.
\end{align*}
Again using Lemma \ref{lem:f2overf1squared_3diss}, we see that 
\begin{align*}
\sum_{n=0}^\infty a_{20}(9n+6)q^{3n+2} 
&\equiv 
f_3^2 \left( \frac{q^2Y(q^3)^2}{D(q^3)} \right) \pmod{3}
\end{align*}
or 
\begin{align*}
\sum_{n=0}^\infty a_{20}(9n+6)q^{n} 
&\equiv 
\frac{f_6^3}{f_3}\frac{f_2^2}{f_1} \pmod{3}.
\end{align*}
Using the same logic as above, we see that, for all $n\geq 0$, 
\begin{equation*}
a_{20}(9(3n+2)+6) = a_{20}(27n+24) \equiv 0 \pmod{3}. 
\end{equation*}

\bigskip  

\noindent
{\it Proof that $a_{23}(27n+25) \equiv 0 \pmod{3}$}

\bigskip 

\noindent 
We know 
\begin{align*}
\sum_{n=0}^\infty a_{23}(n)q^n 
&= 
\frac{f_2^{22}}{f_1^{23}} = 
\frac{f_2^{21}}{f_1^{21}}\cdot \frac{f_2}{f_1^2} \\
&\equiv 
\frac{f_{6}^7}{f_3^7}\cdot \frac{f_2}{f_1^2} \pmod{3}  \\
&\equiv 
\frac{f_6^7}{f_3^7}\left( \frac{D(q^9)^2 +2qD(q^9)Y(q^3)+q^2Y(q^3)^2}{D(q^3)}  \right) \pmod{3} 
\end{align*}
using Lemma \ref{lem:f2overf1squared_3diss}.  
Thus, 
\begin{align*}
\sum_{n=0}^\infty a_{23}(3n+1)q^{3n+1} 
&\equiv 
\frac{f_6^7}{f_3^7}\left( \frac{2qD(q^9)Y(q^3)}{D(q^3)} \right) \pmod{3}  \\
&\equiv 
2q\frac{f_6^7f_{9}f_{18}}{f_3^8} \pmod{3}.
\end{align*}
This means that
\begin{align*}
\sum_{n=0}^\infty a_{23}(3n+1)q^{n} 
&\equiv 
2\frac{f_2^7f_{3}f_{6}}{f_1^8} \pmod{3} \\
&\equiv 
2\frac{f_6^3}{f_3}\frac{f_2}{f_1^2} \pmod{3}.
\end{align*}
Again using Lemma \ref{lem:f2overf1squared_3diss}, we see that 
\begin{align*}
\sum_{n=0}^\infty a_{23}(9n+7)q^{3n+2} 
&\equiv 
2\frac{f_6^3}{f_3} \left( \frac{q^2Y(q^3)^2}{D(q^3)} \right) \pmod{3}
\end{align*}
or 
\begin{align*}
\sum_{n=0}^\infty a_{23}(9n+7)q^{n} 
&\equiv 
2\frac{f_6^4}{f_3^2}\frac{f_2^2}{f_1} \pmod{3}.
\end{align*}
Using the same logic as above, we see that, for all $n\geq 0$, 
\begin{equation*}
a_{23}(9(3n+2)+7) = a_{23}(27n+25) \equiv 0 \pmod{3}. 
\end{equation*}

\bigskip  

\noindent
{\it Proof that $a_{26}(27n+26) \equiv 0 \pmod{3}$}

\bigskip 

\noindent 
We know 
\begin{align*}
\sum_{n=0}^\infty a_{26}(n)q^n 
&= 
\frac{f_2^{25}}{f_1^{26}} = 
\frac{f_2^{24}}{f_1^{24}}\cdot \frac{f_2}{f_1^2} \\
&\equiv 
\frac{f_{6}^8}{f_3^8}\cdot \frac{f_2}{f_1^2} \pmod{3}  \\
&\equiv 
\frac{f_6^8}{f_3^8}\left( \frac{D(q^9)^2 +2qD(q^9)Y(q^3)+q^2Y(q^3)^2}{D(q^3)}  \right) \pmod{3} 
\end{align*}
using Lemma \ref{lem:f2overf1squared_3diss}.  
Thus, 
\begin{align*}
\sum_{n=0}^\infty a_{26}(3n+2)q^{3n+2} 
&\equiv 
\frac{f_6^8}{f_3^8}\left( \frac{q^2Y(q^3)^2}{D(q^3)} \right) \pmod{3}  \\
&\equiv 
q^2\frac{f_6^7f_{18}^4}{f_3^8f_9^2} \pmod{3}.
\end{align*}
This means that
\begin{align*}
\sum_{n=0}^\infty a_{26}(3n+2)q^{n} 
&\equiv 
\frac{f_2^7f_{6}^4}{f_1^8f_3^2} \pmod{3} \\
&\equiv 
\frac{f_6^6}{f_3^4}\frac{f_2}{f_1^2} \pmod{3}.
\end{align*}
Again using Lemma \ref{lem:f2overf1squared_3diss}, we see that 
\begin{align*}
\sum_{n=0}^\infty a_{26}(9n+8)q^{3n+2} 
&\equiv 
\frac{f_6^6}{f_3^4} \left( \frac{q^2Y(q^3)^2}{D(q^3)} \right) \pmod{3}
\end{align*}
or 
\begin{align*}
\sum_{n=0}^\infty a_{26}(9n+8)q^{n} 
&\equiv 
\frac{f_6^5}{f_3^3}\frac{f_2^2}{f_1} \pmod{3}.
\end{align*}
Using the same logic as above, we see that, for all $n\geq 0$, 
\begin{equation*}
a_{26}(9(3n+2)+8) = a_{26}(27n+26) \equiv 0 \pmod{3}. 
\end{equation*}

\end{proof}

A priori, each of the congruences in Theorem \ref{thm:mod3_divisibilities} may serve as the basis case of an induction proof for an infinite family of congruences modulo 3 satisfied by the respective function (as in the proof of Theorem \ref{thm:TF_mod3}).  Of course, in order for such a family to be proved via induction, we require an internal congruence modulo 3 satisfied by each of the functions in question in order to complete the induction step in the proof.   
Indeed, we have the following: 

\begin{theorem}
For $0\leq t\leq 8$, and for all $n\geq 0$, 
$$
a_{3t+2}(27n+r_t) \equiv a_{3t+2}(3n+s_t) \pmod{3}
$$
where 
$$r_t = 
\begin{cases}
t & \text{if } t \text{ is even},\\
t+9 & \text{if } t \text{ is odd},
\end{cases}
$$
and 
$$s_t = 
\begin{cases}
0 & \text{if } t \text{ is even},\\
1 & \text{if } t \text{ is odd}.
\end{cases}
$$
\end{theorem}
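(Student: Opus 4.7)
The plan is to mimic, for each $t \in \{0, 1, \ldots, 8\}$, the two-branched computation used inside the proof of Theorem~\ref{thm:TF_mod3} that established $a_5(27n+10) \equiv a_5(3n+1) \pmod{3}$. In that argument, both $\sum a_5(3n+1)q^n$ and $\sum a_5(27n+10)q^n$ were brought to the same closed form modulo $3$ (namely $2f_1f_2f_6$), so that a term-by-term comparison yielded the congruence. I expect each of the nine cases here to follow the same template.

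The first branch computes the right-hand side. Starting from
$$\sum_{n \ge 0} a_{3t+2}(n)q^n \equiv \frac{f_6^t}{f_3^t} \cdot \frac{f_2}{f_1^2} \pmod{3},$$
which is immediate from \eqref{genfn_general} and Lemma~\ref{lem:FD}, I apply Lemma~\ref{lem:f2overf1squared_3diss} and extract the $3n+s_t$ subprogression. After replacing $q^3$ by $q$ and simplifying, I would obtain
\begin{align*}
\sum_{n \geq 0} a_{3t+2}(3n) q^n &\equiv \frac{f_2^{t+1} f_3^4}{f_1^{t+2} f_6^2} \pmod{3} \quad (t \text{ even}), \\
\sum_{n \geq 0} a_{3t+2}(3n+1) q^n &\equiv 2 \frac{f_2^t f_3 f_6}{f_1^{t+1}} \pmod{3} \quad (t \text{ odd}).
\end{align*}

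The second branch computes the left-hand side. For $t \in \{1, \ldots, 8\}$, the proofs within Theorem~\ref{thm:mod3_divisibilities} already produced an expression of the shape $\sum a_{3t+2}(9n+t)q^n \equiv E_t(q) \cdot f_2^2/f_1 \pmod{3}$, with $E_t(q)$ involving only $f_3$ and $f_6$. The same path works for $t=0$; after rewriting via Lemma~\ref{lem:FD} (in particular $f_1^3 \equiv f_3$ and $f_2^3 \equiv f_6 \pmod{3}$), one arrives at $\sum a_2(9n)q^n \equiv (f_3^5/f_6^3) \cdot f_2^2/f_1 \pmod{3}$. Writing $r_t = 9w_t + t$ with $w_t \in \{0,1\}$ (so $w_t=0$ when $t$ is even and $w_t=1$ when $t$ is odd), I apply Lemma~\ref{lem:psi_3diss} to $f_2^2/f_1$, extract the $3n+w_t$ subprogression, and replace $q^3$ by $q$, to obtain a closed form for $\sum a_{3t+2}(27n+r_t)q^n$ modulo $3$.

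It then remains to verify that the two closed forms coincide modulo $3$, which is done by an elementary simplification via Lemma~\ref{lem:FD}. The main obstacle is the combinatorial bookkeeping: each of the nine cases requires its own algebraic matching, and one must carefully track which residue class is extracted at each of the three successive dissections. The restriction $s_t, w_t \in \{0,1\}$ (as opposed to $\{0,1,2\}$) is dictated by Lemma~\ref{lem:psi_3diss} being only a binary dissection (there is no $q^{3n+2}$ term in the dissection of $f_2^2/f_1$), which is ultimately the source of the parity-based formulas in the theorem statement.
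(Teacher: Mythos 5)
Your proposal is correct and follows essentially the same route as the paper: both branches (the single 3-dissection giving $\sum a_{3t+2}(3n+s_t)q^n$, and the iterated dissection via Lemma~\ref{lem:f2overf1squared_3diss} twice followed by Lemma~\ref{lem:psi_3diss} applied to the $f_2^2/f_1$ factor giving $\sum a_{3t+2}(27n+r_t)q^n$) are exactly what the paper computes, and your general formulas specialize correctly to each of the nine cases. The only cosmetic differences are that you treat $t=0$ directly whereas the paper cites \cite{HS2005}, and that you organize the bookkeeping uniformly in $t$ rather than writing out the cases separately.
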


\begin{proof}
We now provide detailed proofs for each of the eight results above.  

\bigskip  

\noindent
{\it Proof that $a_{2}(27n) \equiv a_{2}(3n) \pmod{3}$}

\bigskip 

\noindent 
This result is proven in \cite{HS2005}, keeping in mind that, for all $n\geq 0$, $a_2(n) = \overline{p}(n)$.  

\bigskip  

\noindent
{\it Proof that $a_{5}(27n+10) \equiv a_{5}(3n+1) \pmod{3}$}

\bigskip 

\noindent 
This result is proven above (and is a slightly stronger result than that proven by Thejitha and Fathima \cite{TF}).  

\bigskip  

\noindent
{\it Proof that $a_{8}(27n+2) \equiv a_{8}(3n) \pmod{3}$}

\bigskip 

\noindent 
Thanks to our previous work, we know 
\begin{align*}
\sum_{n=0}^\infty a_8(3n)q^{n} 
&\equiv 
\frac{f_2^3f_3^4}{f_1^4f_{6}^2} \pmod{3} \\
&\equiv 
\frac{f_1^8}{f_2^3} \pmod{3}.
\end{align*}
Moreover, in our work above, we noted that 
\begin{align*}
\sum_{n=0}^\infty a_8(9n+2)q^{n} 
&\equiv 
\frac{f_3^3}{f_6}\left( \frac{f_6f_9^2}{f_3f_{18}} +q\frac{f_{18}^2}{f_9} \right) \pmod{3}.
\end{align*}
Hence, 
\begin{align*}
\sum_{n=0}^\infty a_8(27n+2)q^{3n} 
&\equiv 
\frac{f_3^3}{f_6}\left( \frac{f_6f_9^2}{f_3f_{18}} \right) \pmod{3} \\
&= 
\frac{f_3^2f_9^2}{f_{18}} 
\end{align*}
so that 
\begin{align*}
\sum_{n=0}^\infty a_8(27n+2)q^{n} 
&\equiv 
\frac{f_1^2f_3^2}{f_{6}} \pmod{3} \\
&\equiv 
\frac{f_1^8}{f_{2}^3} \pmod{3} 
\end{align*}
and this yields our result.  

\bigskip  

\noindent
{\it Proof that $a_{11}(27n+12) \equiv a_{11}(3n+1) \pmod{3}$}

\bigskip 

\noindent 
Thanks to our previous work, we know 
\begin{align*}
\sum_{n=0}^\infty a_{11}(3n+1)q^{3n+1} 
&\equiv 
\frac{f_6^3}{f_3^3}\left( \frac{2qD(q^9)Y(q^3)}{D(q^3)}  \right) \pmod{3} \\
&=
2q\frac{f_6^3f_9f_{18}}{f_3^4}
\end{align*}
which means 
\begin{align*}
\sum_{n=0}^\infty a_{11}(3n+1)q^{n} 
&\equiv 
2\frac{f_2^3f_3f_6}{f_1^4} \pmod{3} \\
&\equiv 
2\frac{f_2^6}{f_1} \pmod{3}.  
\end{align*}
Moreover, in our work above, we noted that 
\begin{align*}
\sum_{n=0}^\infty a_{11}(9n+3)q^{n} 
&\equiv 
2\frac{f_1^2f_3f_6}{f_2} \pmod{3} \\
&\equiv 
2f_3^2\frac{f_2^2}{f_1} \pmod{3}.
\end{align*}
Hence, using Lemma \ref{lem:psi_3diss},
\begin{align*}
\sum_{n=0}^\infty a_{11}(27n+12)q^{3n+1} 
&\equiv 
2f_3^2\left( q\frac{f_{18}^2}{f_9}\right) \pmod{3} \\
&= 
2q\frac{f_3^2f_{18}^2}{f_9} 
\end{align*}
so that 
\begin{align*}
\sum_{n=0}^\infty a_{11}(27n+12)q^{n} 
&\equiv 
2\frac{f_1^2f_{6}^2}{f_3}  \pmod{3} \\
&\equiv 
2\frac{f_2^6}{f_{1}} \pmod{3} 
\end{align*}
and this yields our result.  

\bigskip  

\noindent
{\it Proof that $a_{14}(27n+4) \equiv a_{14}(3n) \pmod{3}$}

\bigskip 

\noindent 
Thanks to our previous work, we know 
\begin{align*}
\sum_{n=0}^\infty a_{14}(3n)q^{3n} 
&\equiv 
\frac{f_6^4}{f_3^4}\left( \frac{D(q^9)^2}{D(q^3)}  \right) \pmod{3} \\
&=
\frac{f_6^5f_9^4}{f_3^6f_{18}^2}
\end{align*}
which means 
\begin{align*}
\sum_{n=0}^\infty a_{14}(3n)q^{n} 
&\equiv 
\frac{f_2^5f_3^4}{f_1^6f_{6}^2} \pmod{3} \\
&\equiv 
\frac{f_1^6}{f_2} \pmod{3}.  
\end{align*}
Moreover, in our work above, we noted that 
\begin{align*}
\sum_{n=0}^\infty a_{14}(9n+4)q^{n} 
&\equiv 
f_3f_6\frac{f_2^2}{f_1} \pmod{3}.
\end{align*}
Hence, using Lemma \ref{lem:psi_3diss},
\begin{align*}
\sum_{n=0}^\infty a_{14}(27n+4)q^{3n} 
&\equiv 
f_3f_6\left( \frac{f_6f_9^2}{f_3f_{18}}  \right) \pmod{3} \\
&= 
\frac{f_6^2f_9^2}{f_{18}} 
\end{align*}
so that 
\begin{align*}
\sum_{n=0}^\infty a_{14}(27n+4)q^{n} 
&\equiv 
\frac{f_2^2f_{3}^2}{f_6}  \pmod{3} \\
&\equiv 
\frac{f_1^6}{f_{2}} \pmod{3} 
\end{align*}
and this yields our result.  

\bigskip  

\noindent
{\it Proof that $a_{17}(27n+14) \equiv a_{17}(3n+1) \pmod{3}$}

\bigskip 

\noindent 
Thanks to our previous work, we know 
\begin{align*}
\sum_{n=0}^\infty a_{17}(3n+1)q^{3n+1} 
&\equiv 
\frac{f_6^5}{f_3^5}\left( \frac{2qD(q^9)Y(q^3)}{D(q^3)} \right) \pmod{3} \\
&\equiv 
2q\frac{f_6^5f_9f_{18}}{f_3^6} \pmod{3}
\end{align*}
which means 
\begin{align*}
\sum_{n=0}^\infty a_{17}(3n+1)q^{n} 
&\equiv 
2\frac{f_2^8}{f_1^3} \pmod{3}.  
\end{align*}
From our earlier work, we know
\begin{align*}
\sum_{n=0}^\infty a_{17}(9n+5)q^{n} 
&\equiv 
2f_6^2\frac{f_2^2}{f_1} \pmod{3}.
\end{align*}
Thus, 
\begin{align*}
\sum_{n=0}^\infty a_{17}(27n+14)q^{3n+1} 
&\equiv 
2f_6^2\left(  q\frac{f_{18}^2}{f_9} \right) \pmod{3} 
\end{align*}
which implies that 
\begin{align*}
\sum_{n=0}^\infty a_{17}(27n+14)q^{n} 
&\equiv 
2\frac{f_2^2f_6^2}{f_3}  \pmod{3}  \\
&\equiv 
2\frac{f_2^8}{f_1^3}  \pmod{3}.
\end{align*}

\bigskip  

\noindent
{\it Proof that $a_{20}(27n+6) \equiv a_{20}(3n) \pmod{3}$}

\bigskip 

\noindent 
Thanks to our previous work, we know 
\begin{align*}
\sum_{n=0}^\infty a_{20}(3n)q^{n} 
&\equiv 
f_3^2\frac{f_2}{f_1^2} \pmod{3} \\
&\equiv 
f_1^4f_2 \pmod{3}.
\end{align*}
Moreover, we also determined above that 
\begin{align*}
\sum_{n=0}^\infty a_{20}(9n+6)q^{n} 
&\equiv 
\frac{f_6^3}{f_3}\frac{f_2^2}{f_1} \pmod{3}.
\end{align*}
Using Lemma \ref{lem:psi_3diss}, we then know that 
\begin{align*}
\sum_{n=0}^\infty a_{20}(27n+6)q^{3n} 
&\equiv 
\frac{f_6^3}{f_3}\left(  \frac{f_6f_9^2}{f_3f_{18}} \right) \pmod{3} \\
&= 
\frac{f_6^4f_9^2}{f_3^2f_{18}}  
\end{align*}
which yields 
\begin{align*}
\sum_{n=0}^\infty a_{20}(27n+6)q^{n} 
&\equiv  
\frac{f_2^4f_3^2}{f_1^2f_{6}}   \pmod{3} \\
&\equiv 
f_1^4f_2 \pmod{3}
\end{align*}
and this yields our result.  

\bigskip  

\noindent
{\it Proof that $a_{23}(27n+16) \equiv a_{23}(3n+1) \pmod{3}$}

\bigskip 

\noindent 
Thanks to our previous work, we know 
\begin{align*}
\sum_{n=0}^\infty a_{23}(3n+1)q^{n} 
&\equiv 
2\frac{f_6^3}{f_3}\frac{f_2}{f_1^2} \pmod{3} \\
&\equiv 
2\frac{f_2^{10}}{f_1^5} \pmod{3}.  
\end{align*}
Moreover, we also showed that 
\begin{align*}
\sum_{n=0}^\infty a_{23}(9n+7)q^{n} 
&\equiv 
2\frac{f_6^4}{f_3^2}\frac{f_2^2}{f_1} \pmod{3}
\end{align*}
which means 
\begin{align*}
\sum_{n=0}^\infty a_{23}(27n+16)q^{3n+1} 
&\equiv 
2\frac{f_6^4}{f_3^2}\left(  q\frac{f_{18}^2}{f_9}  \right) \pmod{3}
\end{align*}
or 
\begin{align*}
\sum_{n=0}^\infty a_{23}(27n+16)q^{n} 
&\equiv 
2\frac{f_2^4f_{6}^2}{f_1^2f_3}   \pmod{3} \\
&\equiv 
2\frac{f_2^{10}}{f_1^5} \pmod{3}.
\end{align*}
This proves our result.  

\bigskip  

\noindent
{\it Proof that $a_{26}(27n+8) \equiv a_{26}(3n) \pmod{3}$}

\bigskip 

\noindent 
Thanks to our previous work, we know 
\begin{align*}
\sum_{n=0}^\infty a_{26}(n)q^{3n} 
&\equiv 
\frac{f_6^8}{f_3^8}\left( \frac{D(q^9)^2}{D(q^3)}  \right) \pmod{3}  \\
&=
\frac{f_6^9f_9^4}{f_3^{10}f_{18}^2}
\end{align*}
which means 
\begin{align*}
\sum_{n=0}^\infty a_{26}(n)q^{n} 
&\equiv 
\frac{f_2^9f_3^4}{f_1^{10}f_{6}^2} \pmod{3} \\
&\equiv 
f_1^2f_2^3 \pmod{3}.  
\end{align*}
Also from our work above, we know 
\begin{align*}
\sum_{n=0}^\infty a_{26}(9n+8)q^{n} 
&\equiv 
\frac{f_6^5}{f_3^3}\frac{f_2^2}{f_1} \pmod{3}
\end{align*}
which implies
\begin{align*}
\sum_{n=0}^\infty a_{26}(27n+8)q^{3n} 
&\equiv 
\frac{f_6^5}{f_3^3}\left(  \frac{f_6f_9^2}{f_3f_{18}} \right) \pmod{3} \\
\end{align*}
or
\begin{align*}
\sum_{n=0}^\infty a_{26}(27n+8)q^{n} 
&\equiv 
\frac{f_2^6f_3^2}{f_1^4f_{6}} \pmod{3} \\
&\equiv 
f_1^2f_2^3 \pmod{3}.  
\end{align*}
This completes our proof.  
\end{proof}

Thanks to the above work, we can now state two new infinite families of congruences modulo 3 satisfied by two of the functions in this set.  

\begin{corollary}
\label{cor:new_inf_families_mod3}
For all $\alpha \geq 0$ and all $n\geq 0$, 
\begin{align}
a_{20}\left(3^{2\alpha + 3}n + \frac{198\cdot 3^{2\alpha} - 6}{8}\right)  &\equiv 0 \pmod{3}, \text{\ and} \label{a20_inf_fam} \\
a_{23}\left(3^{2\alpha + 3}n + \frac{207\cdot 3^{2\alpha} - 7}{8}\right)  &\equiv 0 \pmod{3}. \label{a23_inf_fam} 
\end{align}
\end{corollary}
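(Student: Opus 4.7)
The plan is to prove each congruence in Corollary \ref{cor:new_inf_families_mod3} by induction on $\alpha$, in close analogy with the proof of Theorem \ref{thm:TF_mod3}.  All the ingredients are already in hand: the $\alpha = 0$ base cases are precisely the $t=6$ and $t=7$ cases of Theorem \ref{thm:mod3_divisibilities}, while the ``doubling'' relations needed for the induction step are the corresponding cases of the internal congruence theorem, namely $a_{20}(27n+6) \equiv a_{20}(3n) \pmod{3}$ and $a_{23}(27n+16) \equiv a_{23}(3n+1) \pmod{3}$.

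For \eqref{a20_inf_fam}, set $B_\alpha = \frac{198 \cdot 3^{2\alpha} - 6}{8}$.  A short arithmetic check shows that $B_\alpha$ is always a multiple of $3$ and satisfies $B_{\alpha+1} = 9 B_\alpha + 6$.  Assuming inductively that $a_{20}(3^{2\alpha+3} n + B_\alpha) \equiv 0 \pmod{3}$ for all $n \geq 0$, I would write every integer of this form as $3m$ with $m = 3^{2\alpha+2} n + B_\alpha/3$; applying $a_{20}(27m+6) \equiv a_{20}(3m) \pmod{3}$ then immediately yields $a_{20}(3^{2\alpha+5} n + 9B_\alpha + 6) \equiv 0 \pmod{3}$, and since $9B_\alpha + 6 = B_{\alpha+1}$, the induction step is complete.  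The argument for \eqref{a23_inf_fam} is essentially identical using $C_\alpha = \frac{207 \cdot 3^{2\alpha} - 7}{8}$, which satisfies $C_\alpha \equiv 1 \pmod{3}$ and $C_{\alpha+1} = 9 C_\alpha + 7$; writing $3^{2\alpha+3} n + C_\alpha = 3m+1$ and applying $a_{23}(27m+16) \equiv a_{23}(3m+1) \pmod{3}$ produces the congruence for $\alpha+1$.

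There is really no ``hard part'' here, since the heavy lifting has already been done in the proofs of Theorems \ref{thm:mod3_divisibilities} and the internal congruence theorem.  The only potential source of error is the arithmetic bookkeeping needed to verify that the shifts line up correctly.  In particular, one must confirm that $B_\alpha$ and $C_\alpha$ lie in the correct residue classes modulo $3$ so that the internal congruences apply, and that the recurrences $9B_\alpha + 6 = B_{\alpha+1}$ and $9C_\alpha + 7 = C_{\alpha+1}$ hold exactly; both checks follow immediately from the closed-form expressions.
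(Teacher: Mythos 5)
Your proposal is correct and follows exactly the same route as the paper: induction on $\alpha$ with base cases given by the $t=6$ and $t=7$ instances of Theorem \ref{thm:mod3_divisibilities}, and the induction step supplied by the internal congruences $a_{20}(27n+6)\equiv a_{20}(3n)\pmod{3}$ and $a_{23}(27n+16)\equiv a_{23}(3n+1)\pmod{3}$. The only difference is that you spell out the arithmetic bookkeeping ($B_{\alpha+1}=9B_\alpha+6$, $C_{\alpha+1}=9C_\alpha+7$, and the residue checks modulo $3$) that the paper leaves implicit, and these verifications are all correct.
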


\begin{proof}
Each of the above is proved via induction (in a manner very similar to the proof of Theorem \ref{thm:TF_mod3}).  Namely, the base case for the congruence family \eqref{a20_inf_fam} states that, for all $n\geq 0$, $a_{20}(27n+24) \equiv 0 \pmod{3}$.  This fact was proven above.  Moreover, the induction step follows from the fact that, for all $n\geq 0$, 
$$a_{20}(27n+6) \equiv a_{20}(3n) \pmod{3}$$ 
which was also proven above.  Similarly, \eqref{a23_inf_fam} follows from the fact that, for all $n\geq 0$, $a_{23}(27n+25)\equiv 0 \pmod{3}$ and 
$$a_{23}(27n+16) \equiv a_{23}(3n+1) \pmod{3},$$ 
both of which were proven above.  
\end{proof}
\noindent
We close this work by noting that Theorem \ref{thm:mod3_divisibilities} can be easily generalized to an infinite family of results.  

\begin{corollary}
\label{mod3_fullgeneralization}
For $j\geq 0$, $0\leq t\leq 8$, and all $n\geq 0$, 
$$a_{27j+3t+2}(27n+(18+t)) \equiv 0 \pmod{3}.$$ 
\end{corollary}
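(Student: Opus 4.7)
The plan is to follow the same template as the proof of Corollary \ref{cor:infinite_family_mod5}: factor the generating function for $a_{27j+3t+2}$ so that one factor is a power series in $q^{27}$ modulo 3, and the other factor is exactly the generating function for $a_{3t+2}$, whose behavior on residue class $18+t$ modulo $27$ is already controlled by Theorem \ref{thm:mod3_divisibilities}.

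Concretely, using \eqref{genfn_general}, I would first write
$$
\sum_{n=0}^\infty a_{27j+3t+2}(n)q^n
= \frac{f_2^{27j+3t+1}}{f_1^{27j+3t+2}}
= \frac{f_2^{27j}}{f_1^{27j}} \cdot \frac{f_2^{3t+1}}{f_1^{3t+2}},
$$
and then apply Lemma \ref{lem:FD} three times with $p=3$, using in succession the reductions $f_1^{27j}\equiv f_3^{9j}$, $f_3^{9j}\equiv f_9^{3j}$, and $f_9^{3j}\equiv f_{27}^{j}$ modulo $3$, along with the parallel chain for the $f_2$ tower. This yields
$$
\sum_{n=0}^\infty a_{27j+3t+2}(n)q^n
\equiv \frac{f_{54}^{j}}{f_{27}^{j}}\sum_{n=0}^\infty a_{3t+2}(n)q^n \pmod{3}.
$$
Since $f_{54}^{j}/f_{27}^{j}$ is a power series purely in $q^{27}$, extracting the coefficient of $q^{27n+(18+t)}$ on the right-hand side produces a $\mathbb{Z}$-linear combination of values of the form $a_{3t+2}(27m+(18+t))$, each of which vanishes modulo $3$ by Theorem \ref{thm:mod3_divisibilities}. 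The desired divisibility $a_{27j+3t+2}(27n+(18+t))\equiv 0\pmod{3}$ then follows immediately.

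There is no genuine obstacle here; the argument is a direct generalization of the single-factor trick used for Corollary \ref{cor:infinite_family_mod5}, with the only small bookkeeping being the threefold iteration of Lemma \ref{lem:FD} needed because $27=3^3$. One could equivalently present the proof as induction on $j$ with base case $j=0$ being Theorem \ref{thm:mod3_divisibilities} and the inductive step furnished by multiplying the generating function by $f_2^{27}/f_1^{27}\equiv f_{54}/f_{27}\pmod{3}$, which is again a power series in $q^{27}$.
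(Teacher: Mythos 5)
Your proposal is correct and follows essentially the same route as the paper: factor out $f_2^{27j}/f_1^{27j}$, reduce it to $f_{54}^{j}/f_{27}^{j}$ modulo $3$ via Lemma \ref{lem:FD}, and observe that this factor is a series in $q^{27}$ so that the coefficient extraction reduces to Theorem \ref{thm:mod3_divisibilities}. The only difference is that you spell out the threefold iteration of Lemma \ref{lem:FD}, which the paper leaves implicit.
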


\begin{proof}
For any $j\geq 0$, 
\begin{align*}
\sum_{n=0}^\infty a_{27j+3t+2}(n)q^n
&= 
\frac{f_2^{27j+3t+1}}{f_1^{27j+3t+2}} 
= 
\frac{f_2^{27j}}{f_1^{27j}}\cdot \frac{f_2^{3t+1}}{f_1^{3t+2}} \\
&\equiv 
\frac{f_{54}^{j}}{f_{27}^{j}}\cdot \frac{f_2^{3t+1}}{f_1^{3t+2}} \pmod{3} \\
&= 
\frac{f_{54}^{j}}{f_{27}^{j}}\sum_{n=0}^\infty a_{3t+2}(n)q^n.
\end{align*}
The result then follows thanks to Theorem \ref{thm:mod3_divisibilities} and the fact that $\frac{f_{54}^{j}}{f_{27}^{j}}$ is a function of $q^{27}$.  
\end{proof}

\end{document}